\theoremstyle{definition}
\newtheorem{theorem}{Theorem}
\newtheorem{corollary}[theorem]{Corollary}
\newtheorem{proposition}[theorem]{Proposition}
\newtheorem{definition}[theorem]{Definition}
\newtheorem{problem}[theorem]{Problem}
\newcommand\point{\mathrm{pt}}
\begin{document}

\title{A complete $g$-vector for convex polytopes}

\author{Jonathan Fine\\
15 Hanmer Road, Milton Keynes, MK6 3AY, United Kingdom\\
email: jfine@pytex.org}

\date{10 January 2010}
\maketitle

\begin{abstract}

\noindent
We define an extension of the toric (middle perversity intersection
homology) $g$-vector of a convex polytope $X$. The extended $g(X)$
encodes the whole of the flag vector $f(X)$ of $X$, and so is called
complete.  We find that for many examples that $g_k(X)\geq 0$ for most
$k$ (independent of $X$).
\end{abstract}

\section{Definitions}

In this section we will show that the equation
\begin{equation*}
g_k(\>(C,D)^\ell(\mathrm{pt})\>)\>\> = \>\> (k \leq \ell)
\end{equation*}
defines an extension of toric $g$-vector.

First, we define some terms.  Let $P$ be a convex polytope, or
polytope for short.  Throughout we will use $CP$ to denote the
\emph{cone} (or prism) on $P$, and $IP$ to denote the \emph{cylinder}
(or prism) on $P$.  Our definition relies on

\begin{theorem}[Generalised Dehn-Sommerville, \cite{baybil85:gen}]
The flag vectors of the polytopes $W(\point)$, where $W$ is a word in
$C$ and $IC$, provide a basis for the span of all convex polytope flag
vectors.
\end{theorem}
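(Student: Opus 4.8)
The plan is to determine the dimension $c_d$ of the linear span of the flag $f$-vectors of all $d$-polytopes, and then to recognise the polytopes $W(\point)$ as an explicit spanning set of exactly that size. A word $W$ in $C$ and $IC$, applied to the point, produces a polytope whose dimension is the weight of $W$, counting $1$ for each $C$ and $2$ for each $IC$; so the words of weight $d$ correspond bijectively to the compositions of $d$ into parts $1$ and $2$, of which there are exactly the Fibonacci number $F_{d+1}$ (with $F_1=F_2=1$). It is therefore enough to prove: (a) $c_d\le F_{d+1}$ for every $d$; and (b) for each $d$ the $F_{d+1}$ vectors $f(W(\point))$ with $W$ of weight $d$ are linearly independent. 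The span in (b) is contained in the space of dimension $\le F_{d+1}$ of (a), so both bounds are equalities and the vectors $f(W(\point))$ form a basis in every degree, which is the assertion.

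For (a) I would use the generalised Dehn--Sommerville relations. Every interval $[F,G]$ in the face lattice of a polytope is again the face lattice of a polytope, hence an Eulerian poset, so Euler's relation holds on each such interval; summing Euler's relation over all chains of faces of a prescribed type produces a family of linear relations among the flag numbers $f_S$. The point is that these relations already cut the span down to dimension $F_{d+1}$. The most transparent bookkeeping uses the noncommutative $\mathbf{ab}$-index $\Psi_P$, a generating function for the flag $h$-vector: an inclusion--exclusion computation identifies the Dehn--Sommerville relations with the statement that $\Psi_P$ lies in the subalgebra generated by $\mathbf c=\mathbf a+\mathbf b$ and $\mathbf d=\mathbf{ab}+\mathbf{ba}$, and the number of $\mathbf{cd}$-monomials of degree $d$ (with $\deg\mathbf c=1$ and $\deg\mathbf d=2$) is precisely $F_{d+1}$. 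This yields $c_d\le F_{d+1}$; the matching lower bound comes for free from (b).

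For (b) I would first record how the cone and cylinder transform flag vectors. There are elementary formulas expressing $f_S(CP)$ and $f_S(IP)$ through the flag numbers of $P$ --- each chain in $CP$ either lies in $P$ or passes through the apex, and each chain in $IP$ is governed by which of its faces meet the two copies of $P$ --- and, equivalently, on $\mathbf{cd}$-indices the pyramid and prism act by left and right multiplication by $\mathbf c$ together with a fixed derivation $G$ of $\mathbb Z\langle\mathbf c,\mathbf d\rangle$ (Ehrenborg--Readdy). Starting from $\Psi_{\point}=1$ and iterating, I would prove by induction on the weight that the $\mathbf{cd}$-index of $W(\point)$ contains, with nonzero coefficient, the $\mathbf{cd}$-monomial obtained by replacing each $C$ in $W$ by $\mathbf c$ and each $IC$ by $\mathbf d$, and that for a suitable term order on $\mathbf{cd}$-monomials this is the largest one occurring. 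Distinct words yield distinct monomials, so the transition matrix from $\{f(W(\point))\}$ to the $\mathbf{cd}$-monomial basis is triangular and therefore invertible: the vectors $f(W(\point))$ of weight $d$ are linearly independent, and with (a) they form a basis. The main obstacle is exactly this triangularity claim --- one must choose the term order and the distinguished monomial of each word so that the $C$-step (left multiplication by $\mathbf c$) and the compound $IC$-step both act upper-triangularly and send the two families of words onto disjoint sets of leading monomials. Following the derivation $G$ through an arbitrary word in $C$ and $IC$ is the delicate point, and is essentially where Bayer and Billera's work is concentrated.
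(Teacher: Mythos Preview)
The paper does not prove this theorem at all: it is quoted as a known result from Bayer--Billera~\cite{baybil85:gen} and used as input, so there is no ``paper's own proof'' to compare against. That said, your outline is the right shape --- the two-step plan ``upper bound on the span via the relations'' plus ``exhibit $F_{d+1}$ independent flag vectors'' is exactly how the result is established --- and your part~(a) is fine: the existence of the $\mathbf{cd}$-index is precisely the statement that the generalised Dehn--Sommerville relations cut the span down to dimension~$F_{d+1}$.

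The genuine gap is in part~(b). Your proposed mechanism for independence --- that under some term order the $\mathbf{cd}$-monomial obtained from $W$ by $C\mapsto\mathbf c$, $IC\mapsto\mathbf d$ is the \emph{leading} monomial of $\Psi_{W(\point)}$ --- is false already in dimension~$2$. The triangle $CC(\point)$ has $\Psi=\mathbf c^2+\mathbf d$ and the square $IC(\point)$ has $\Psi=\mathbf c^2+2\mathbf d$; whichever of $\mathbf c^2$ and $\mathbf d$ you declare larger, both polytopes have that same leading monomial, so no order makes the transition matrix triangular. In dimension~$3$ one finds
\[
\Psi_{CCC(\point)}=\mathbf c^3+2\mathbf{cd}+2\mathbf{dc},\qquad
\Psi_{C\,IC(\point)}=\mathbf c^3+3\mathbf{cd}+3\mathbf{dc},\qquad
\Psi_{IC\,C(\point)}=\mathbf c^3+3\mathbf{cd}+4\mathbf{dc},
\]
and since every entry of the $3\times 3$ coefficient matrix is nonzero, no permutation of rows and columns yields a triangular matrix. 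The matrix is still invertible (determinant~$1$), which is the point, but the argument that gets you there is not a leading-term argument; it is the inductive independence argument in Bayer--Billera, which separates words beginning with $C$ from words beginning with $IC$ by projecting onto suitable flag-number coordinates and reducing to lower weight. You correctly flag this step as ``the delicate point'', but the specific triangularity claim you would try to prove is not merely delicate --- it is not true, and the induction has to be organised differently.
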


Thus, any linear function $\alpha$ on polytope flag vectors is
determined by its values $\alpha(f(W(\point)))$, or
$\alpha(W(\point))$ for short, where $W$ ranges over all words in $C$
and $IC$.  Conversely, the values $\alpha(W(\point))$ determine
$\alpha$.  We find it helpful to introduce $D = IC - CC$.  This is an
operator on formal sums of convex polytopes.  If $\alpha$ is a
function defined on convex polytopes and $P = \sum \lambda_iP_i$ is a
formal sum of convex polytopes then $\alpha(P) = \sum
\lambda_i\alpha(P_i)$ extends $\alpha$ to formal sums.

We will use the following form of generalised Dehn-Sommerville.

\begin{corollary}
Suppose $\alpha(W(\point))$ is known for all words $W$ in $C$ and $D$.
Then $\alpha$ has a unique extension, as a linear function of flag
vectors, to all convex polytopes.
\end{corollary}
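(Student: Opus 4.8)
The plan is to deduce the corollary from the Generalised Dehn--Sommerville theorem by exhibiting an invertible change of basis relating words in $C$ and $IC$ to words in $C$ and $D$. Assign weight $1$ to the letter $C$ and weight $2$ to each of $IC$ and $D$, so that a word $W$ of weight $n$ applied to a point yields an $n$-dimensional polytope — or, for words involving $D$, a formal combination of $n$-dimensional polytopes — and hence a vector $W(\point)$ in the span $V$ of all convex polytope flag vectors. (Recall that cone and cylinder act linearly on flag vectors and that $V$ is graded by dimension, so $W(\point)$ lies in the weight-$n$ graded piece $V_n$.) The theorem states that $\{W(\point) : W$ a word of weight $n$ in $C$ and $IC\}$ is a basis of $V_n$; it therefore suffices to show that $\{W(\point) : W$ a word of weight $n$ in $C$ and $D\}$ is likewise a basis of $V_n$ for every $n$. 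Taking the union over $n$ then produces a basis of $V$, from which both the existence and the uniqueness of the linear extension of $\alpha$ follow at once.

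To compare the two families, substitute $IC = D + CC$ into a word $W = A_1\cdots A_m$ with each $A_i\in\{C,IC\}$ and expand: this writes $W(\point)$ as a signed sum of terms, one for each way of replacing every occurrence of $IC$ in $W$ by either $D$ or $CC$, and every such term is a word in $C$ and $D$ of the same weight as $W$. Since the substitution preserves weight, it is enough to show that for each $n$ the weight-$n$ words in $C$ and $D$ span (equivalently, form a basis of) $V_n$. Order the weight-$n$ words in $C,IC$, and the weight-$n$ words in $C,D$, by the number of weight-$2$ letters, decreasing. The term of the expansion of $W$ in which every $IC$ is replaced by $D$ has coefficient $1$ and exactly as many $D$'s as $W$ had $IC$'s; moreover the letterwise map $IC\mapsto D$ is a bijection from weight-$n$ words in $C,IC$ with exactly $k$ letters $IC$ onto weight-$n$ words in $C,D$ with exactly $k$ letters $D$. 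Every other term of the expansion replaces at least one $IC$ by $CC$, which strictly decreases the number of weight-$2$ letters. Hence the matrix expressing the $C,IC$-basis of $V_n$ in terms of the $C,D$-words is triangular with $1$'s on the diagonal, so it is invertible, and the $C,D$-words therefore form a basis of $V_n$ as well. (As a check, both families are enumerated by the same recursion $a_n = a_{n-1}+a_{n-2}$, according to whether the first letter has weight $1$ or weight $2$.)

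Finally, since $\{W(\point) : W$ a word in $C$ and $D\}$ is a basis of $V$, a linear function of flag vectors is uniquely determined by — and may be prescribed arbitrarily through — its values on these vectors. The hypothesis supplies precisely these values $\alpha(W(\point))$, so there is one and only one linear functional on $V$ realising them, and this is the asserted extension of $\alpha$ to all convex polytope flag vectors.

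The genuinely routine ingredients — linearity of $C$ and $I$ on flag vectors, the dimension grading of $V$, and the Fibonacci count — I would invoke without elaboration. The only step needing care is the triangularity of the weight-$n$ block: one must pick the right secondary grading (the number of weight-$2$ letters), verify that $IC\leftrightarrow D$ matches up leading terms bijectively with coefficient $1$, and confirm that each $IC\to CC$ correction strictly decreases that grading. That is where what little content the proof has is concentrated.
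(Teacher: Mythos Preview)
Your proof is correct. The paper states this result as an immediate corollary without proof; your triangular change-of-basis argument via the substitution $IC = D + CC$ is precisely the reasoning the paper leaves implicit, carried out in full detail.
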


We are particularly interested in occurrences, in $W$, of $CD$.  (This
is because they creating new singularities.  The `polytopes' $X$ and
$DX$ have very similar minimal stratifications, as do $CX$ and $CCX$.
But $CDX$ is sure to a new stratum, namely the apex.)

\begin{proposition}
Suppose $W$ is a word in $C$ and $D$.  Then $W$ has a unique
expression in the form
\begin{equation}
\label{eqn:CDk}
  D^{i_0}C^{j_0}\> CD \> D^{i_1}C^{j_1} \> CD \> \ldots \> D^{i_r}C^{j_r}
\end{equation}
where the $i_n, j_n$ lie in $\mathbb{N}$.
\end{proposition}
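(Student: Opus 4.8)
The statement is purely combinatorial: here $C$ and $D$ are just two letters, $W$ is an element of the free monoid on them, and \eqref{eqn:CDk} asks for a canonical factorisation of $W$. So I would set the geometry aside entirely and parse $W$ from the left. The guiding observation is that the displayed copies of $CD$ in \eqref{eqn:CDk} can only be the occurrences of the factor $CD$ inside $W$: each block $D^{i_n}C^{j_n}$ contains no such factor (all its $D$'s precede all its $C$'s), while every separator is such a factor, and distinct occurrences of $CD$ in a word cannot overlap. Hence $r$ is forced to be the number of occurrences of the factor $CD$ in $W$, and once $r$ is known the blocks are obtained by cutting each such occurrence into its $C$ (the last letter of the preceding $C^{j_n}$) and its $D$ (the first letter of the following $D^{i_{n+1}}$); the degenerate possibilities $i_n=0$ and $j_n=0$ merely record that two separators, or a separator and an end of $W$, abut.

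To make this precise I would induct on the length $|W|$. If $W$ contains no factor $CD$, then all its $C$'s lie to the right of all its $D$'s, so $W=D^{i_0}C^{j_0}$ for a unique pair $(i_0,j_0)\in\mathbb{N}^2$; this is \eqref{eqn:CDk} with $r=0$, and it is the only admissible form, since every expression \eqref{eqn:CDk} with $r\ge1$ contains a factor $CD$. If $W$ does contain the factor $CD$, let $D^{i_0}$ be the maximal initial run of $D$'s in $W$ and $C^{m}$ the maximal run of $C$'s immediately following it; then $m\ge1$, and the letter after this $C$-run exists and is a $D$ (otherwise $W=D^{i_0}C^{m}$ would contain no factor $CD$). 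Thus $W=D^{i_0}C^{m-1}\,(CD)\,W'$ for a unique suffix $W'$ with $|W'|<|W|$; applying the inductive hypothesis to $W'$ and concatenating, with $j_0:=m-1$, produces an expression of the form \eqref{eqn:CDk}.

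For uniqueness I would show that the leading parameters are forced in any expression \eqref{eqn:CDk}. Whether $r=0$ or $r\ge1$ is determined by whether $W$ has a factor $CD$. When $r\ge1$, the word $W$ has length at least $i_0+2$, so position $i_0+1$ exists and carries a $C$ — the first letter of $C^{j_0}$ if $j_0\ge1$, otherwise the $C$ of the first separator — so $D^{i_0}$ is exactly the maximal initial $D$-run of $W$ and $i_0$ is determined; the maximal $C$-run following it is then $C^{j_0}C=C^{j_0+1}$ (the next letter being the $D$ of the first separator), so $j_0$ is its length minus one, hence also determined; and the remaining suffix $W'$, obtained by deleting the prefix $D^{i_0}C^{j_0}CD$, is determined as a word and carries an expression \eqref{eqn:CDk} with $r-1$ separators, unique by induction. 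Hence the factorisation of $W$ is unique.

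I do not expect a genuine obstacle. The one point needing care is the bookkeeping created by a separator $CD$ straddling the end of one block $C^{j_n}$ and the start of the next block $D^{i_{n+1}}$ — this is the source of the ``$-1$'' in $j_0=m-1$ and of the degenerate exponents $i_n,j_n=0$ — and I would keep it under control by consistently working with maximal runs and letter positions rather than trying to read \eqref{eqn:CDk} directly off a run-length encoding of $W$.
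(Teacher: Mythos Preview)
Your argument is correct and rests on the same observation as the paper's: the separators in \eqref{eqn:CDk} are exactly the occurrences of the factor $CD$ in $W$, and the intervening pieces, being $CD$-free, must have the form $D^iC^j$. The paper states this in two sentences (``split $W$ at the occurrences of $CD$; the resulting pieces do not contain $CD$ and so must be of the form $D^iC^j$''), whereas you unwind the same idea into an explicit left-to-right induction with careful bookkeeping; the extra detail is sound but not a different route.
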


\begin{proof}
Split $W$ at the occurences of $CD$ as a subword.  The resulting
pieces do not contain $CD$ and so must be of the form $D^iC^j$.
\end{proof}

\begin{definition}
If $k =((i_0,j_0),\ldots,(i_r,j_r))$ is a sequence of $r$ pairs of
natural numbers then we will say that $k$ is a \emph{Fibonacci index}
of \emph{order} $r+1$ and of \emph{degree} $2\sum i_n + \sum j_n + 3r$.
\end{definition}

\begin{definition}
For $k$ a Fibonacci index we let $(C, D)^k$ denote the word~(\ref{eqn:CDk}).
\end{definition}

Often, we will use $[{}^{i_0}_{j_0}{}^{i_1}_{j_1}]$ or
$[i_0,i_1;j_0,j_1]$ to denote a Fibonacci index.  Concatenation,
denoted by juxtaposition, can be used to combine indexes.  Thus
$[a;b][i_0,i_1;j_0,j_1]=[a,i_0,i_1;b,j_0,j_1]$.

\begin{definition}
The \emph{singularity partial order} $u<v$ on Fibonacci indexes is
generated by the \emph{shifting}
\begin{equation*}
\begin{bmatrix}a\\c\end{bmatrix}
\begin{bmatrix}b\\d+1\end{bmatrix}
<
\begin{bmatrix}a\\c+1\end{bmatrix}
\begin{bmatrix}b\\d\end{bmatrix}
\end{equation*}
and \emph{splitting}
\begin{equation*}
\begin{bmatrix}a+b+1\\c+1\end{bmatrix}
<
\begin{bmatrix}a\\0\end{bmatrix}
\begin{bmatrix}b\\c\end{bmatrix}
\end{equation*}
relations, together with the \emph{concatenation}
\begin{equation*}
u < v \implies xuy < xvy
\end{equation*}
relation, where $u, v, x, y$ are any Fibonacci indexes.
\end{definition}

\begin{definition}
For $k$ a Fibonacci index $g_k(X)$ is the linear function on polytope
flag vectors determined by
\begin{equation}
\label{eqn:g_k}
g_k(\>(C,D)^\ell(\mathrm{pt})\>)\>\> = \>\> (k \leq \ell)
\end{equation}
where $(k\leq\ell)$ is $1$ if $k\leq\ell$ and $0$ otherwise.
\end{definition}

\begin{proposition}
If $d = \dim X$ and $k = [i;2d-2i]$ then $g_i(X) = g_k(X)$.
\end{proposition}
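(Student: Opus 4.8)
The plan is to reduce, using generalised Dehn--Sommerville, to a computation on the standard polytopes $(C,D)^\ell(\mathrm{pt})$, to evaluate the toric $g$-number there, and then to match the outcome against the Boolean $(k\le\ell)$. Since the $i$-th toric $g$-number $g_i$ and the new function $g_k$ are both linear functions of the flag vector, the Corollary reduces the proposition to the single identity
\[
g_i\big((C,D)^\ell(\mathrm{pt})\big)\;=\;(k\le\ell)
\]
for all Fibonacci indexes $\ell$ with $\dim(C,D)^\ell(\mathrm{pt})=d$, the right-hand side being the value that defines $g_k$ in~(\ref{eqn:g_k}).

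For the left-hand side I would appeal to two classical facts about the toric polynomials: coning leaves the toric $g$-polynomial unchanged, $\gt(CP,x)=\gt(P,x)$, and a cylinder multiplies the toric $h$-polynomial by $1+x$, $\hvt(IP,x)=(1+x)\hvt(P,x)$. Using $D=IC-CC$ these combine to say that $D$ acts on toric $g$-polynomials as multiplication by $x$, i.e.\ $\gt(DP,x)=x\,\gt(P,x)$. Applying this along the word~(\ref{eqn:CDk}) for $(C,D)^\ell$---the occurrences of $C$ having no effect and each $D$ producing a factor $x$---gives $\gt\big((C,D)^\ell(\mathrm{pt}),x\big)=x^{\,m(\ell)}$, where $m(\ell)=\sum_n i_n+r$ is the number of $D$'s in $(C,D)^\ell$. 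Consequently $g_i\big((C,D)^\ell(\mathrm{pt})\big)$ is $1$ when $i=m(\ell)$ and $0$ otherwise.

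It remains to check that $(k\le\ell)=1$ exactly when $m(\ell)=i$. Each generating relation of the singularity order---the shifting and splitting relations, and concatenation---preserves both the number of $C$'s and the number of $D$'s of a Fibonacci index, as is clear from the two displayed relations; since $k$ has exactly $i$ occurrences of $D$, this already forces $m(\ell)=i$ whenever $k\le\ell$. For the converse I would show that the order-one index $[\,\#D\,;\,\#C\,]$ lies below every index with the same $C$- and $D$-counts: given $\ell$ of order at least two, apply the shifting relation repeatedly to push all the $C$'s out of the first block, then an inverse splitting to merge the first two blocks---which lowers the order by one and preserves both counts---and induct. As $k$ is precisely the order-one index carrying the $C$- and $D$-counts of any $\ell$ with $m(\ell)=i$, this gives $k\le\ell$; hence $(k\le\ell)=(m(\ell)=i)=g_i\big((C,D)^\ell(\mathrm{pt})\big)$, and with the reduction the proposition follows.

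The step I expect to be the main obstacle is the identity $\gt(DP,x)=x\,\gt(P,x)$: extracting it from the cone and cylinder formulas means tracking how the passage from $\hvt$ to $\gt$---truncation to the lower half of the coefficients---interacts with the factor $1+x$, and this needs the two parities of $\dim P$ handled separately. Once that is settled, the reduction step is immediate and the combinatorial matching is a short induction on the order of the Fibonacci index.
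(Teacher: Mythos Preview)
Your proposal is correct and follows essentially the same route as the paper: reduce via the $C,D$ basis, use that $C$ fixes and $D$ shifts the toric $g$-numbers, and match this against the characterisation $k\le\ell\iff (C,D)^\ell$ has exactly $i$ letters $D$. The only differences are expository: the paper simply cites Stanley for the identities $g_i(CP)=g_i(P)$ and $g_{i+1}(DP)=g_i(P)$ (your $\gt(DP,x)=x\,\gt(P,x)$ repackaged), and declares the combinatorial equivalence ``easily seen'' where you supply the shift-then-merge induction explicitly.
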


\begin{proof}
It is easily seen that $k\leq \ell$ iff $(C,D)^\ell$ contains exactly
$i$ occurences of $D$.  We have $g_i(\point) = \delta_{0,i}$
(Kronecker delta).  It follows from basic properties of
$g_i$~\cite{sta87:gen} that $g_i(CP)$ = $g_i(P)$ and $g_{i+1}(DP) =
g_i(D)$.  The result now follows.
\end{proof}

\begin{theorem}
The flag vector $f$ of convex polytopes is a linear function of the
extended $g$-vector.
\end{theorem}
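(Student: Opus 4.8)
\textbf{Set-up.} The plan is to read the theorem through the Corollary: a linear function on polytope flag vectors is nothing but an assignment of a value to each polytope $(C,D)^k(\point)$, and I will show that passing from such an assignment to the family $\bigl(g_k(X)\bigr)_k$ is an invertible --- indeed unitriangular --- change of coordinates, carried out one dimension at a time. Let $V=\bigoplus_n V_n$ be the span of all convex polytope flag vectors, graded by dimension. The Corollary says exactly that the vectors $(C,D)^k(\point)$, as $k$ runs over all Fibonacci indices, form a basis of $V$: ``uniqueness of the extension'' forces them to span $V$, and ``existence of an extension for every prescribed set of values'' forces linear independence. Since $C$ raises dimension by $1$ and $IC,CC$ each raise it by $2$, the polytope $(C,D)^k(\point)$ is a formal combination of polytopes all of dimension $\deg k$; writing $S_n$ for the finite set of Fibonacci indices of degree $n$, we get that $\{(C,D)^k(\point):k\in S_n\}$ is a basis of $V_n$, with dual basis $\{e_\ell:\ell\in S_n\}\subset V_n^{\ast}$. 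A one-line check shows that each of the three generating moves of the singularity order (shifting, splitting, concatenation) leaves $2\sum i_n+\sum j_n+3r$ unchanged, so $k\le\ell$ implies $\deg k=\deg\ell$; hence $g_k$ kills $V_m$ for $m\ne\deg k$, and for $k\in S_n$ the defining equation~(\ref{eqn:g_k}) reads $g_k|_{V_n}=\sum_{\ell\in S_n,\ k\le\ell}e_\ell$.

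\textbf{Inversion.} Fix $n$. On the finite set $S_n$ the singularity relation is a partial order, so it admits a linear extension, say $k_1,\dots,k_N$ with $k_i\le k_j\Rightarrow i\le j$. With respect to this ordering the matrix $M=\bigl[(k_i\le k_j)\bigr]_{i,j}$ is upper triangular with $1$'s on the diagonal, hence invertible over $\mathbb{Q}$ (its inverse is the M\"obius function of the poset). Thus $M$ carries the dual basis $\{e_\ell\}$ to the family $\{g_k|_{V_n}\}$ invertibly, so the latter is again a basis of $V_n^{\ast}$. Consequently every linear functional on $V_n$ --- in particular each flag number of each $n$-dimensional polytope --- is a rational linear combination of the $g_k$. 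Since each $g_k$ is supported in a single degree, assembling these identities over all $n$ displays the flag vector $f$ as a linear function of the extended $g$-vector, as asserted.

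\textbf{Where the work is.} The degree-invariance check and the triangular bookkeeping are routine; the only step demanding real care is the claim used above that the singularity relation genuinely restricts to a \emph{partial} order --- has no cycles --- on each $S_n$, for without a linear extension the argument collapses. I would settle this by exhibiting a rank function strictly compatible with all the generating moves. A workable choice is the lexicographic pair $\bigl(r(k),\,-\psi(k)\bigr)$, where $r(k)$ is the number of occurrences of the subword $CD$ in $(C,D)^k$ and $\psi(k)=\sum_n n\,j_n$ is the position-weighted sum of the $C$-exponents: a splitting strictly raises $r$; a shifting leaves $r$ fixed (the number of pairs does not change) while raising $-\psi$ by $1$ (it slides a unit of $j$ to an earlier position); and, because a shifting also preserves $\sum j_n$ and a splitting raises $r$, both statistics interact with concatenation well enough that strict monotonicity is inherited in context. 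This forbids cycles, supplies the linear extension, and completes the plan.
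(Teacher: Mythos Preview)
Your proof is correct and follows exactly the triangular-matrix argument the paper sketches in two sentences: the $(C,D)$-word polytopes give a basis for flag vectors, and the change from this basis to the $g_k$ is unitriangular with respect to any linear extension of the singularity order. Your additional care in checking that the singularity relation is genuinely antisymmetric --- via the lexicographic rank $(r,-\psi)$ --- fills in a point the paper simply asserts by calling it a ``partial order'' in the definition.
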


\begin{proof}
The words in $C, D$ provide a basis for the flag vectors, and $g$ is
computed from this basis by applying a lower-triangular matrix.
\end{proof}

\section{Calculations}

We denote by $BP$ the bipyramid on $P$.  In the propositions that
follow, $k$ is a degree $d$ word Fibonacci index, and $X = W(\point)$
for $W$ a length $d$ word in $C$, $I$ and $B$.  The
calculations~\cite{fine:python-hvector} been done using
polymake~\cite{polymake}.

\begin{proposition}
Assume $d \leq 4$. Then $g_k(X)\geq 0$.
\end{proposition}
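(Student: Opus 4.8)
\noindent The plan is to prove this by exhaustive computation, which is feasible because the quantities involved are small: for each $d\le 4$ there are at most $3^d\le 81$ words $W$ of length $d$ in $C$, $I$, $B$, and only Fibonacci-many Fibonacci indices $k$ of degree $d$ (one for $d=1$, two for $d=2$, three for $d=3$, five for $d=4$). For a fixed $d$ I would proceed in four steps. (i) List the words $W$ and compute the flag vector $f(W(\point))$ of the $d$-polytope $W(\point)$; since $C$, $I$, $B$ each raise degree by one this flag vector has degree $d$, and it is obtained either from the elementary linear formulas for the action of the cone, cylinder and bipyramid operations on flag vectors (equivalently on the $\mathbf{cd}$-index), or by realizing $W(\point)$ as an explicit polytope and reading off its flag vector, as in the accompanying computation~\cite{fine:python-hvector} with polymake~\cite{polymake}. (ii) Express $f(W(\point))$ in the degree-$d$ basis consisting of the flag vectors $f((C,D)^\ell(\point))$ with $\ell$ a degree-$d$ Fibonacci index; this is a basis by the Corollary to generalised Dehn--Sommerville together with the Proposition expressing each word in $C$, $D$ in the form~(\ref{eqn:CDk}). (iii) Compute the singularity partial order on degree-$d$ Fibonacci indices. (iv) Conclude from~(\ref{eqn:g_k}) and linearity that, writing $f(W(\point))=\sum_\ell c_\ell\,f((C,D)^\ell(\point))$, one has $g_k(W(\point))=\sum_{\ell\,\geq\, k}c_\ell$, and verify that each of these finitely many sums is nonnegative.

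Two of the steps require some care. In (iii) the singularity partial order is the \emph{closure} of the shifting and splitting relations under concatenation and transitivity, so one must generate all consequences rather than stop at the listed generators; fortunately in low degree the order is tiny (for $d=3$, among the three indices $[{}^0_3]$, $[{}^1_1]$, $[{}^0_0{}^0_0]$ the only nontrivial relation is the splitting relation $[{}^1_1]<[{}^0_0{}^0_0]$). In (ii) the rewriting is an overdetermined but consistent linear system, since there are $3^d$ words $W$ but only Fibonacci-many basis vectors; one solves it once for all $W$ of a given degree, or proceeds recursively via $IC=D+CC$ together with the analogous expansions of $I$ and of $B$ alone.

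The part I expect to be the real obstacle is not mathematical depth but reliability: for the polytopes $W(\point)$ in question the positivity has no visible structural cause, so the proof is a careful bookkeeping exercise in which any slip in the partial order or the change of basis is fatal. Consistent with the abstract's claim that $g_k(X)\ge 0$ for \emph{most} rather than all $k$ in general, the hypothesis $d\le 4$ is essential; a conceptual argument would presumably have to re-express the $c_\ell$ through $\mathbf{cd}$-index coefficients and appeal to nonnegativity of the $\mathbf{cd}$-index, but the signs forced by the singularity partial order do not obviously cooperate with such an identity, which is why only $d\le 4$ is asserted here.

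For $d\le 3$ the computer can in fact be avoided. When $d\le 2$ every $W(\point)$ is a triangle or a square and the claim is immediate. For $d=3$, since $CCC(\point)=\Delta^3$ has $f_\emptyset=1$ while $CD(\point)$ and $DC(\point)$ are differences of polytopes and hence have $f_\emptyset=0$, expanding $f(X)$ in the basis $\{CCC(\point),\,CD(\point),\,DC(\point)\}$ and using the relation from step (iii) gives $g_{[{}^0_3]}(X)=1$, $g_{[{}^0_0{}^0_0]}(X)=2f_1-3f_0$ and $g_{[{}^1_1]}(X)=f_1-f_0-2$; all three are nonnegative because a $3$-polytope has at least four vertices and satisfies $2f_1\ge 3f_0$ (each vertex lies on at least three edges), so that $2(f_1-f_0-2)=2f_1-2f_0-4\ge f_0-4\ge 0$. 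For $d=4$, however, I see no way to avoid simply enumerating the at most $81$ polytopes $W(\point)$ and checking the five values $g_k(W(\point))$ for each.
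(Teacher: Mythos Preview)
Your proposal is correct and is essentially the same approach as the paper's: the paper offers no argument beyond the sentence ``The calculations \cite{fine:python-hvector} have been done using polymake~\cite{polymake}'', so the proposition is asserted purely as the outcome of an exhaustive machine check, which is exactly what you outline (with the bonus of a correct hand verification for $d\le 3$).
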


\begin{proposition}
Let $d = 5$. If $g_k(X)<0$ then $k =[00;11]$, and for this $k$ we
have $g_k(BIC^3(\point))<0$.
\end{proposition}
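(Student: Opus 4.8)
The plan is to treat this as a finite verification, organised so that the few genuinely mathematical points are separated from the bulk computation. First I would list the Fibonacci indices of degree $5$. With $k=((i_0,j_0),\dots,(i_r,j_r))$, the constraint $2\sum_n i_n+\sum_n j_n+3r=5$ forces $r\leq 1$: the order-$1$ indices are $[0;5]$, $[1;3]$ and $[2;1]$ (the solutions of $2i_0+j_0=5$), and the order-$2$ indices are $[00;02]$, $[00;11]$, $[00;20]$, $[10;00]$ and $[01;00]$ (the solutions of $2(i_0+i_1)+(j_0+j_1)=2$); there are exactly eight. So the claim breaks into two parts: (i) for each of the seven indices $k\neq[00;11]$, $g_k(W(\point))\geq 0$ for every length-$5$ word $W$ in $C,I,B$; and (ii) $g_{[00;11]}(BIC^3(\point))<0$, which already shows this index is a genuine exception.

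For (i) I would first express each $g_k$ as an explicit linear functional on flag vectors. By the Corollary to generalised Dehn--Sommerville, $g_k$ is determined by the values $g_k((C,D)^\ell(\point))=(k\leq\ell)$ as $\ell$ runs over words in $C$ and $D$; inverting the triangular change of basis behind the Theorem that $f$ is a linear function of the extended $g$-vector then writes $g_k$ in terms of the flag numbers. The one place care is needed here is computing the singularity partial order explicitly on the (finitely many) Fibonacci indices of degree at most $5$, so as to know exactly which degree-$5$ words $\ell$ dominate each of the eight $k$'s. With the functionals in hand I would compute the flag vector of each $W(\point)$ recursively from the known linear maps on flag vectors induced by cone, prism and bipyramid, and evaluate. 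The number of cases collapses well below $3^5$ because $f$ depends only on combinatorial type --- $C(\point)$, $I(\point)$ and $B(\point)$ are all the segment, and such coincidences persist --- and because $g_k$ obeys structural recursions under cone, prism and bipyramid (reflecting the shifting and splitting relations of the singularity partial order), which let one peel off leading letters; this organises the remaining finite check, which is the polymake computation that is cited.

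For (ii): $C^3(\point)$ is the $3$-simplex, $IC^3(\point)$ the prism over it, and $Y=BIC^3(\point)$ the bipyramid over that prism, a $5$-polytope; expanding $f(Y)$ in the word basis and pairing with the functional $g_{[00;11]}$ from the previous step yields a strictly negative value. The main obstacle is that there is no structural non-negativity theorem to lean on: the results of Stanley and Karu concern only the toric $g_i$, which for a $5$-polytope are the indices $[i;10-2i]$ and so have degree $10$ rather than $5$, so all eight indices here are genuinely new. Hence the assertion that $[00;11]$ is the \emph{only} bad index in degree $5$ is an honest exhaustive check, and the delicate parts are getting the singularity partial order --- and therefore each functional $g_k$ --- exactly right and keeping the signs straight across all the evaluations, which is precisely what the cited polymake calculation does.
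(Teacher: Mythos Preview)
Your plan is exactly what the paper does: the paper offers no argument beyond the sentence ``The calculations have been done using polymake'', so a structured finite verification over the eight degree-$5$ Fibonacci indices and the length-$5$ words in $C,I,B$ is precisely the intended proof.

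One correction, though it does not affect the validity of your approach. You say the toric $g_i$ for a $5$-polytope correspond to indices $[i;10-2i]$ of degree $10$, so that ``all eight indices here are genuinely new''. This follows the paper's formula $k=[i;2d-2i]$ in Proposition~9, but that is a typo: the degree of an order-$1$ index $[i;j]$ is $2i+j$, and the paper itself writes $k=[i;d-2i]$ in Section~3. Thus for $d=5$ the toric $g_0,g_1,g_2$ are exactly your three order-$1$ indices $[0;5]$, $[1;3]$, $[2;1]$, and their nonnegativity on \emph{all} convex $5$-polytopes (not just $CIB$-words) is the Stanley/Karu theorem. So only the five order-$2$ indices are genuinely new, and your exhaustive check is only strictly needed for four of them, with $[00;11]$ handled by the explicit example $BIC^3(\point)$.
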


\begin{proposition}
Let $d = 6$.  If $g_k(X)< 0$ then $k = [00;21]$, and for this $k$
we have $g_k(B^4C^2(\point))<0$.
\end{proposition}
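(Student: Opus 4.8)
The statement is a finite assertion about the degree-$6$ part of the extended $g$-vector, so the plan is to reduce it to linear algebra over $\mathbb{Q}$ and then carry out and cross-check the resulting computation. By the Corollary to generalised Dehn--Sommerville, the flag vectors of the polytopes $(C,D)^{k}(\point)$, with $k$ ranging over the degree-$6$ Fibonacci indexes, form a basis of the span of all $6$-polytope flag vectors; there are $13$ such indexes (four with $r=0$, which by an earlier proposition carry the toric $g$-vector, eight with $r=1$, and one with $r=2$). So the first step is to list these $13$ indexes and the words~(\ref{eqn:CDk}) they name, and then to compute the singularity partial order restricted to this set: beginning from the shifting, splitting and concatenation generators and taking the transitive closure yields the $13\times13$ zeta matrix $Z$ with $Z_{k\ell}=(k\le\ell)$.

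By~(\ref{eqn:g_k}), if a $6$-polytope $X$ has $f(X)=\sum_{\ell}c_\ell\,f((C,D)^{\ell}(\point))$ (the sum over degree-$6$ indexes $\ell$) then $g_k(X)=\sum_{\ell\ge k}c_\ell=(Zc)_k$, so $g(X)=Z\,c(X)$ and it suffices to produce the coordinate vector $c(X)$. To get the change of basis, apply the operators $C$ (cone) and $D=IC-CC$ to the point; each $f((C,D)^{k}(\point))$ is then an explicit integer combination of flag vectors of iterated cones and prisms over a point, all obtained from the standard product and cone formulas. Assembling these as the columns of a matrix $M$ and inverting --- working throughout in a fixed $13$-dimensional coordinatisation of the flag-vector span --- gives $c(X)=M^{-1}f(X)$, hence $g(X)=A\,f(X)$ for the single fixed $13\times13$ matrix $A=ZM^{-1}$ (which is the Theorem above made explicit).

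The remaining step is the enumeration. Run over the $3^{6}=729$ words $W$ in $C$, $I$ and $B$; for each, compute $f(W(\point))$ by iterating the known linear effect of cone, prism and bipyramid on the flag vector --- in practice far fewer than $729$ distinct vectors occur, and one could prune further using the coordinate symmetry and the earlier low-dimensional results, but this is unnecessary --- then form $A\,f(W(\point))$ and read off the signs of its entries. The proposition is exactly the claim that, over all these $X$, the only coordinate that is ever negative is the one indexed by $[00;21]$, and that for $X=B^{4}C^{2}(\point)$ this coordinate is strictly negative; the second half is certified in isolation by exhibiting the flag vector of $B^{4}C^{2}(\point)$ together with the resulting value of $g_{[00;21]}$.

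There is no conceptual obstacle: the content is the verification of a large but bounded machine computation, and the main hazard --- hence the real work --- is keeping conventions consistent. The three danger points are the sign in $D=IC-CC$ and the induced $C,IC\leftrightarrow C,D$ change of basis; the transitive closure of the singularity order, since a relation such as $[00;21]\le\ell$ can require a chain of shifts and splits that is not evident at a glance; and the flag-vector recursions for $C$, $I$ and especially $B$. I would protect against all three with independent checks: the four $r=0$ coordinates must reproduce the toric $g$-vector $(1,g_{1},g_{2},g_{3})$, computable by a separate routine; the Dehn--Sommerville relations impose further linear identities that the pipeline must respect; and re-running everything in dimensions $\le5$ must recover the earlier propositions, in particular $g_{[00;11]}(BIC^{3}(\point))<0$ --- with the observation that the bad index $[00;21]=[0;2][0;1]$ continues the $d=5$ pattern $[00;11]=[0;1][0;1]$, which is reassuring though not itself a proof.
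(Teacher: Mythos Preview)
Your proposal is correct and is essentially the paper's approach: the proposition is stated as a finite computational fact, and the paper's only justification is the one-line remark that ``the calculations have been done using polymake'' with a reference to the author's code. Your write-up spells out the linear algebra (the $13$-dimensional $CD$-basis, the zeta matrix of the singularity order, the enumeration over the $3^6$ words in $C,I,B$) and the sanity checks far more explicitly than the paper itself does.
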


\begin{proposition}
Let $d = 7$.  If $g_k(X)< 0$ then $k \in \{[00;13], [00;31],
[10;11]\}$, and for these $k$ we have $g_k(B^2ICIC^2(\point))<0$.
\end{proposition}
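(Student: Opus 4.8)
The plan is to reduce the statement to a finite verification and to describe how that verification is organised. First I would make the two finite lists explicit. A degree-$7$ Fibonacci index $((i_0,j_0),\dots,(i_r,j_r))$ satisfies $2\sum_n i_n+\sum_n j_n+3r=7$, which forces $r\le 2$; running through the nonnegative solutions in each case gives the complete list — the order-$1$ indices $[0;7],[1;5],[2;3],[3;1]$, the order-$2$ indices (those with $2\sum i_n+\sum j_n=4$), and the three order-$3$ indices $[000;100],[000;010],[000;001]$. Symmetrically, I would list the length-$7$ words $W$ in $C$, $I$, $B$, noting that $C(\point)=I(\point)=B(\point)$ is a segment, so each $X=W(\point)$ is obtained by six further cone/cylinder/bipyramid steps over a segment; the flag vectors $f(X)$ can then be built up recursively from the known flag-vector transformations for $CP$, $IP$, $BP$, or computed directly in polymake as the author does.

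Next, for each $X$ in this list and each degree-$7$ index $k$ I would compute $g_k(X)$. By the generalised Dehn-Sommerville corollary, $g_k$ is the linear functional on flag vectors pinned down by $g_k((C,D)^\ell(\point))=(k\le\ell)$, where $\ell$ ranges over Fibonacci indices; so it is enough to expand $f(X)$ in the basis $\{\,f((C,D)^\ell(\point))\,\}$. Concretely, once all the vectors $f(W(\point))$ and all the basis vectors $f((C,D)^\ell(\point))$ are available numerically, inverting the (square, invertible) basis matrix yields the coordinates $c_\ell$ of $f(X)$, and then $g_k(X)=\sum_\ell c_\ell\,(k\le\ell)=\sum_{\ell\ge k}c_\ell$. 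With this table of values in hand one reads off every pair $(k,X)$ for which $g_k(X)<0$ and checks that the index is always one of $[00;13]$, $[00;31]$, $[10;11]$. Finally, for each of these three indices I would evaluate $g_k$ on the single polytope $B^2ICIC^2(\point)$ — the bipyramid on the bipyramid on the cylinder on the cone on the cylinder on the triangle $C^2(\point)$ — and confirm that the value is negative; this exhibits the claimed witnesses and shows the three-element list cannot be shortened.

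The difficulty is one of scale and bookkeeping rather than of any isolated hard step. The points that actually need care are: making sure the enumeration of degree-$7$ Fibonacci indices and of length-$7$ words is exhaustive; getting the change of basis exactly right, i.e.\ pinning down the lower-triangular $g$-to-$f$ matrix with respect to the singularity partial order and checking its invertibility in degree $7$; and the size of the computation — on the order of two thousand words, each requiring a flag vector in dimension $7$ — which is what makes a machine computation (and an independent re-run) the appropriate form of proof. It is also worth stressing what is and is not claimed: the proposition classifies the negative $g_k$ only among polytopes of the form $W(\point)$ with $W$ a word in $C$, $I$, $B$, so no argument is needed that these are the only $7$-polytopes with a negative $g_k$.
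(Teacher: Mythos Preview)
Your proposal is correct and follows essentially the same route as the paper: the paper offers no argument beyond stating that the calculations were carried out in polymake (via the author's code), and what you have written is a careful description of exactly that finite verification---enumerate the degree-$7$ Fibonacci indices and the length-$7$ $C,I,B$ words, compute flag vectors, change basis to the $(C,D)^\ell$ family, and read off the signs of $g_k(X)$. If anything, your account is more explicit about the bookkeeping (the bound $r\le 2$, the reduction $C(\point)=I(\point)=B(\point)$, the lower-triangular change of basis, and the restriction of the claim to $X=W(\point)$) than the paper itself.
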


\begin{proposition}
Let $d = 8$.  If $g_k(X)< 0$ then $k \in \{ [00;23], [00;41],[10;21],
[000;011]\}$, and for these $k$ we have $g_(B^4CIC^2(\point)) < 0$.
\end{proposition}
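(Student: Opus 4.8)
The statement is a finite census, so my plan is first to reduce it cleanly to one and then to describe how I would run it, exactly as in the cases $d\le 7$. The key reduction is that the singularity partial order preserves degree: the shifting and concatenation relations visibly do, and for splitting both the single pair $[a{+}b{+}1;\,c{+}1]$ and the two pairs $[a,b;\,0,c]$ have degree $2a+2b+c+3$, so the transitive closure stays inside a fixed degree. Hence $(k\le\ell)=0$ whenever $\deg k\ne\deg\ell$. Since the Generalised Dehn--Sommerville Theorem (and its Corollary) is dimension-graded --- the flag vector of a $d$-polytope is a combination of exactly the degree-$d$ basis words $f((C,D)^\ell(\mathrm{pt}))$, of which there are $F_{d+1}$, matching $\dim V_d$ --- it follows that $g_k(X)=0$ for every $8$-polytope $X$ as soon as $\deg k\ne 8$. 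So it suffices to let $k$ range over the $F_9=34$ Fibonacci indices of degree $8$ (these biject, via the Proposition above, with the words in $C$ and $D$ of total weight $8$, i.e.\ the compositions of $8$ into $1$s and $2$s) and to let $X=W(\mathrm{pt})$ range over the $3^8=6561$ polytopes with $W$ a length-$8$ word in $C$, $I$, $B$.

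Next I would build the evaluation map $X\mapsto(g_k(X))_k$. Fix the basis $\{f((C,D)^\ell(\mathrm{pt})):\deg\ell=8\}$ and assemble the $34\times34$ matrix $M$ whose columns are these flag vectors, each obtained by expanding $D=IC-CC$ into an alternating sum of words in $C$ and $I$ and applying the standard pyramid and prism operations on flag $f$-vectors (equivalently via the $cd$-index); $M$ is invertible because its columns are a basis. For any $X=W(\mathrm{pt})$, compute $f(X)$ by the same pyramid/prism recursions together with the bipyramid operation for $B$ (or simply build $X$ in polymake), solve $Mc=f(X)$ for the coordinate vector $c=(c_\ell)$, and then read off
\begin{equation*}
g_k(X)\;=\;\sum_{\ell\,\ge\,k}c_\ell ,
\end{equation*}
the sum over the up-set of $k$. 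This last step needs the poset itself, so beforehand I would compute the singularity order on the $34$ degree-$8$ indices as the transitive closure of the shifting, splitting and concatenation relations restricted to degree $8$.

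With the engine in place the proposition is a direct search: loop over the $6561$ words $W$, compute $g_k(X)$ for the $34$ indices $k$, collect the pairs with $g_k(X)<0$, and verify (a) that the indices $k$ so occurring are exactly $[00;23]$, $[00;41]$, $[10;21]$ and $[000;011]$ --- each of which one checks by hand to be a genuine degree-$8$ index, of orders $2,2,2,3$ respectively --- and (b) that the single polytope $X=B^4CIC^2(\mathrm{pt})$ already realises $g_k(X)<0$ for all four of these $k$. As independent consistency checks I would re-run the same code restricted to weights $\le 7$ and confirm it reproduces the earlier propositions, and verify the defining identity $g_k((C,D)^\ell(\mathrm{pt}))=(k\le\ell)$ on the basis polytopes themselves.

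The main obstacle is that there is no structural shortcut: the claim is genuinely a list, so the difficulty lies entirely in making the enumeration complete and non-redundant, in getting the cone/prism/bipyramid flag-vector recursions and the change of basis right, and --- most delicately --- in computing the degree-$8$ singularity poset correctly, since an error there would silently change which $g_k$ come out negative. One point I would state explicitly in the write-up is that I would \emph{not} prune the list of polytopes by a symmetry before searching: duality, for instance, swaps $I\leftrightarrow B$, but the extended $g$-vector has no evident invariance under flag-vector reversal, so such a symmetry is usable only as an external sanity check, never to shrink the search.
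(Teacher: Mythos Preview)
Your proposal is correct and matches the paper's approach: the paper offers no proof beyond the remark that the calculations were carried out using polymake and the author's code, and what you have written is precisely a careful specification of that finite computation. Your degree-preservation check for the partial order, the count of $34$ degree-$8$ indices, and the verification that each of the four listed exceptional $k$ has degree $8$ are all accurate and make the reduction to a finite search explicit in a way the paper leaves implicit.
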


The exceptional $k$ seem to be following a pattern, but it is not
clear what that pattern is.  The author intends to perform further
calculations using, for example, joins of polytopes.

The numbers $g_k(X)$ are small compared to the components of $f(X)$.
For $X=BCBCBCBC(\mathrm{pt})$ we have
\begin{align*}
f(X) &=(1, 13, 73, 232, 697, 458, 1840, 2764, 578, 2917, 5863, 
5866, 17624, 459, 2802, \\
&\qquad\qquad 7087, 9503, 28552, 7117, 28597, 42960, 213, 1538, 4729, 8021, 24100,\\ 
&\qquad\qquad 8089, 32505, 48832, 4833, 24399, 49050, 49077, 147456)\\
g(X) &= (1, 197, -158, 167, 176, -14, 48, 34, 48, 30, 2, 88, 35, 
36, 33, 11, 18, 4, 77, 3,\\
&\qquad\qquad 25, 37, 33, 10, 17, 4, 16, 0, 3, 78, 3, 2, 26, 1)
\end{align*}

\section{Problems}

The major problem arising from the definition and calculations is, of
course, to find the exceptional $k$ and to prove that for the others
$g_k(X)\geq 0$.  This is known for $k=[i;d-2i]$ by virtue of middle
perversity intersection homology and the decomposition theorem.  For
this reason, we expect the proof of $g_k(X)\geq 0$ for
non-exceptional $k$ to be hard and rewarding.

Here is a more accessible and purely combinatorial problem.  But first
we need a definition.  Recall that each convex polytope polytope $X$
has a polar $X^\vee$, and $f(X^\vee)$ is a linear function of $f(X)$.
Thus, $g^\vee(X) = g(X^\vee)$ is a linear function of $g(X)$.

\begin{definition}
Suppose $X$ is either a convex $d$-polytope or a formal sum of such.
Suppose $S$ is a subset of the $d$-indexes.  If $g_k(X)\geq 0$ and
$g^\vee_k(X)\geq 0$ for all $k\in S$ then we will say $S$ is
\emph{effective} on $X$.
\end{definition}

\begin{problem}
Here $X$ denotes a formal sum of convex polytopes (or a point in the
span of convex polytope flag vectors).  The problem is to find index
sets $\{S_n\}$ such that if $S_d$ is effective on $X$ then it follows
that $S_{d+1}$ is effective on $CX$, $IX$ and $BX$.
\end{problem}

A suitable solution to this problem would remove the limit $d\leq 8$
on the calculations in the previous section, and help us understand
the exceptional indexes.

\bibliographystyle{amsplain} 
\bibliography{ehv-note2.bib}

\providecommand{\bysame}{\leavevmode\hbox to3em{\hrulefill}\thinspace}
\providecommand{\MR}{\relax\ifhmode\unskip\space\fi MR }
\providecommand{\MRhref}[2]{%
  \href{http://www.ams.org/mathscinet-getitem?mr=#1}{#2}
}
\providecommand{\href}[2]{#2}
\begin{thebibliography}{1}

\bibitem{baybil85:gen}
M.M. Bayer and L.J. Billera, \emph{{G}eneralized {D}ehn-{S}ommerville relations
  for polytopes, spheres and {E}ulerian partially ordered sets}, Invent. Math.
  \textbf{79} (1985), no.~1, 143--157.

\bibitem{fine:python-hvector}
J.~Fine, \emph{Python modules for computations on convex polytope h-vectors},
  \url{http://code.google.com/p/python-hvector}.

\bibitem{polymake}
Ewgenij Gawrilow and Michael Joswig, \emph{polymake: a framework for analyzing
  convex polytopes}, Polytopes --- Combinatorics and Computation (Gil Kalai and
  G\"unter~M. Ziegler, eds.), Birkh\"auser, 2000, pp.~43--74.

\bibitem{sta87:gen}
R.P. Stanley, \emph{{G}eneralized ${H}$-vectors, intersection cohomology of
  toric varieties, and related results}, {C}ommutative algebra and
  combinatorics ({K}yoto, 1985) (Amsterdam-New York), Adv. Stud. Pure Math.,
  vol.~11, North-Holland, 1987, pp.~187--213.

\end{thebibliography}

\end{document}